\def\@settitle{\begin{center}%
		\baselineskip14\p@\relax
		\normalfont\Large
		\@title
	\end{center}%
}
\def\@settitle{\begin{center}%
		\baselineskip14\p@\relax
		\normalfont\Large
		\@title
	\end{center}%
}
\newcommand{\N}{\mathbb{N}}
\newcommand{\Z}{\mathbb{Z}}
\newcommand{\Q}{\mathbb{Q}}
\newcommand{\R}{\mathbb{R}}
\newcommand{\C}{\mathbb{C}}
\newcommand{\SL}{\mathrm{SL}}
\newcommand{\GL}{\mathrm{GL}}
\makeindex \setcounter{tocdepth}{2}
\makeindex \setcounter{tocdepth}{2}
\newtheorem{thm}{Theorem}[section]
\newtheorem{prop}[thm]{Proposition}
\newtheorem{lem}[thm]{Lemma}
\theoremstyle{definition}
\newtheorem{rem}[thm]{Remark}
\numberwithin{equation}{section}
\title[Eisenstein series in equivariant cohomology]{Classical periods of Eisenstein series and Bernoulli polynomials in the equivariant cohomology of a torus}
\author{Peter Xu}
\begin{document}
	
	\makeatletter
	\@setabstract
	\makeatother
	\maketitle
	
	\begin{abstract}
		We find group cochains valued in currents giving explicit representatives for the $\GL_2$-equivariant polylogarithm class of a torus. Based on the construction of weight-$2$ Eisenstein series for $\GL_2$ from this polylogarithm class, we give a geometrically-flavored derivation of the classical formulas for the associated Dedekind-Rademacher homomorphisms, i.e. the periods of $E^2_{\alpha,\beta}$ for various nonzero torsion sections $(\alpha,\beta)$.
	\end{abstract}
	
	\tableofcontents
	
	\section{Introduction: classical Eisenstein series}
	
	Consider the classical weight-$2$ Eisenstein series by the analytic continuation/Hecke regularized series
	\begin{equation} \label{eq:eis}
		E_2^{\alpha,\beta}(\tau):= \left(\sum_{m,n\in \mathbb{Z}^2 \setminus 0} \frac{\exp(m\alpha+n\beta)}{|m\tau + n|^{2s} (m\tau +n)^2}\right)_{s=0}
	\end{equation}
	where $\tau\in\mathcal{H}$ is a variable in the upper half-plane and $(\alpha,\beta) \in (\frac{1}{N}\mathbb{Z}/\mathbb{Z})^2-\{(0,0)\}$ parameterizes an additive character of $\mathbb{Z}^2$ of conductor dividing $N$, for some integer $N> 1$. We may assume $N$ is minimal, i.e. $(\alpha,\beta)$ is a primitive vector, so that the conductor is precisely $N$. Then $E_2^{\alpha,\beta}(\tau)$ is a holomorphic modular form of level $\Gamma(N)$; see, e.g., \cite[III(11)]{Weil}.
	
	A classical formula due to Siegel \cite{Sieg} gives formulas for the periods of these modular forms in terms of \emph{periodic Bernoulli polynomials}: we write 
	\begin{equation} \label{eq:bers}
		B_2(x):=\{x\}^2-\{x\}+1/6, B_1(x):=
		\begin{dcases*}
			0
			& if  $x\in \Z$\,, \\[1ex]
			\{x\}-1/2 & otherwise\,.
		\end{dcases*}
	\end{equation}
	for the first two periodic Bernoulli polynomials, thought of as functions on $\mathbb{R}/\mathbb{Z}$. Here, $\{x\}$ denotes the ``fractional part'' function which takes the remainder modulo $1$ (i.e. in $[0,1)$) of any real number. Then associated to any matrix 
	\[
	\gamma := \begin{pmatrix} a&b \\ c&d \end{pmatrix} \in \Gamma(N),
	\]
	the integral
	\[
	\Phi_{\alpha,\beta}(\gamma):=\int_{\tau_0}^{\gamma \tau_0} E_2^{\alpha,\beta}(\tau) d\tau
	\]
	is independent of the basepoint $\tau_0$, and yields a homomorphism $\Gamma(N)\to \mathbb{Q}$ given by 
	\begin{equation} \label{eq:classical}
		\Phi_{\alpha,\beta}(\gamma)= 
		\begin{dcases*}
			\frac{b}{d} B_2(\beta)
			& if  $c=0$\,, \\[1ex]
			\frac{a+d}{c} B_2(\beta)-2\sum_{i=0}^{|c|-1} B_1\left(\frac{\beta+i}{|c|}\right)B_1\left(a\frac{\beta+i}{c}-\alpha\right)
			& otherwise\,.
		\end{dcases*}
	\end{equation}
	This formula exhibits the \emph{a priori} non-obvious fact that the various Eisenstein series $E^{\alpha,\beta}_2$, which together span all the holomorphic Eisenstein classes in the first cohomology of any arithmetic subgroup of the modular group, give \emph{rational} classes. In fact, by taking suitably smoothed combinations (``stabilizations''), one can even deduce integrality results.
	
	The original proof of this formula, in \cite[Theorem 13]{Sieg}, was by analytic means, grouping terms in infinite series and dealing with delicate convergence issues. Our proof is almost entirely algebraic, and closely tied to the underlying linear algebraic geometry of the $2$-torus with $\GL_2$-action.
	
	The content of the article can be viewed as an algebraic approach to the construction of the Bernoulli-Eisenstein classes of Sczech \cite{Scz1}. 
	
	\subsection{Acknowledgments}
	
	I would like to thank Marti Roset Julia for helping me to understand how to reconcile the explicit formulas, and Nicolas Bergeron for his inspiration in pursuing this work.
	
	\section{Equivariant/de Rham Eisenstein classes}
	
	We follow the treatment of \cite{BCG} to identify $E_2^{\alpha,\beta}$ with a class in cohomology. Let $\Gamma$ be a torsion-free subgroup of $\text{GL}_2(\mathbb{Z})$;\footnote{The torsion-free assumption is not strictly necessary, but we make it for simplicity.} then the $\Gamma$-action on the upper half-plane $\mathcal{H}$ by Möbius transformations 
	\[
	\begin{pmatrix}
		a&b\\c&d
	\end{pmatrix} \tau := \frac{a\tau +b}{c\tau+d}
	\]
	is free and therefore $Y(\Gamma):= \Gamma \backslash \mathcal{H}$ is a model for the classifying space of $\Gamma$. The torus bundle over the classifying space
	\[
	{T}(\Gamma) := \Gamma \backslash (\mathcal{H}\times (\C/\mathbb{Z}^2))
	\]
	then corresponds to the torus $T:=\mathbb{C}/\mathbb{Z}^2$ with left $\Gamma$-action given by the monodromy action on fibers (i.e. by the left standard representation of $\text{GL}_2(\mathbb{Z})$), in the sense that there is a natural isomorphism 
	\begin{equation}\label{eq:dict}
		H^\bullet_\Gamma (T)\cong H^\bullet({T}(\Gamma))
	\end{equation}
	between the equivariant cohomology of $T$ and the cohomology of the total space of the bundle $\mathbf{T}(\Gamma)$.
	
	Here, ``natural'' means that this isomorphism exists for not just the torus, but for \emph{any} $\Gamma$-space and corresponding bundle over $Y(\Gamma)$, and is functorial for morphisms between such objects. See, for example, the second author's thesis \cite[\S3]{X} for an explanation of this dictionary between bundles over the classifying space and equivariant spaces.
	
	Then following \cite[\S3]{BCG}, if we fix an auxiliary integer $c>1$, there is a unique class
	\[
	z_\Gamma^{(c)} \in H^{1}_\Gamma(T-T[c], \mathbb{Z}[1/c])
	\]
	characterized by being invariant by $[a]_*$ (for all integers $a$ relatively prime to $c$, which we denote by $\N^{(c)}$) and having residue 
	\[
	[T[c]-c^2\{0\}]\in H^2_{\Gamma,T[c]}(T, \mathbb{Z}[1/c])\cong H^0(T[c])^\Gamma,
	\]
	where $H^\bullet_{\Gamma,T[c]}(T)$ denotes the equivariant cohomology with support. When $\Gamma=\Gamma(N)$ and a $\Gamma$-fixed point $(\alpha,\beta)\in \frac{1}{N}\Z^2/\Z = T[N]$, the formulas of \cite[\S9]{BCG} tell us that that for any $c>1$ with $(c\alpha, c\beta)=(\alpha,\beta)$.
	\[
	(\alpha,\beta)^* \frac{1}{1-c^2}z_\Gamma^{(c)} \in H^1_\Gamma(*, \Z[1/c])=H^1(\Gamma, \Z[1/c])
	\]
	has a de Rham representative $\frac{1}{2}[E_2^{\alpha,\beta}]\in H^1(Y(\Gamma), \Z[1/c])$ as defined in the introduction, under the identification \eqref{eq:dict}. 
	
	\subsection{Computing with complexes}
	We now define the main tool we need, in this setting: the \emph{distributional de Rham complex} of $T$. Write $\mathcal{D}_T^i$ for the real-valued smooth $i$-currents on $T$, i.e. the linear dual of the compactly-supported smooth $(2-i)$-forms $\Omega^{2-i}_{T,c}$. The exterior derivative $d: \mathcal{D}^{i}_T\to  \mathcal{D}^{i+1}_T$ is defined as the graded adjoint of the exterior derivative on forms, i.e.
	\[
	(dc)(\eta) := (-1)^{\deg c} c(d\eta).
	\]
	With this differential, the currents form a complex
	\[
	\mathcal{D}^{0}_T\to \mathcal{D}^{1}_T\to \mathcal{D}^{2}_T,
	\]
	computing the real cohomology of $T$. There is an injective quasi-isomorphism from the usual de Rham complex
	\begin{equation} \label{eq:formcurrent}
		\upsilon: \Omega^{i}_{T} \hookrightarrow \mathcal{D}^{i}_T, \hookrightarrow \omega \mapsto \left(\eta \mapsto \int_{T}\eta \wedge \omega\right)
	\end{equation}
	whose injectivity can be seen locally by the non-degeneracy of the wedge product pairing. Via this map, we can and will implicitly view smooth forms as currents. 
	
	We would like to say that the map $\upsilon$ is a natural isomorphism, but this functoriality actually fails, because the integral over $T$ depends on its orientation, and so is reversed in sign by orientation-reversing maps. Consequently, for orientation-reversing maps, the action on currents can fail to give the correct action on cohomology. For example, on $S^1$, pushforward by the inverse map $[-1]_*$ sends the $1$-form $dz\mapsto -dz$, as it does for the associated cohomology class, but if we take the ``natural'' adjoint action, we have:
	\[
	[-1]_*(\upsilon(dz))(\eta) =(\upsilon(dz))([-1]^*\eta) = \int_{S^1} [-1]^*\eta \wedge dz = \int_{S^1}\eta(-z)\, dz = \int_{S^1}\eta(z)\, dz = (\upsilon(dz))(\eta)
	\]
	for a compactly supported smooth $0$-form (i.e. function on $S^1$) $\eta$, so we see that $[-1]_*$ actually fixes the associated current.\footnote{Philosophically, what is happening is that the distributional de Rham complex is really computing Borel-Moore homology, not cohomology. The usual isomorphism between the two is via Poincaré duality, which depends on a choice of orientation class.} 
	
	This subtlety addressed, we now introduce an important class of currents: associated to closed oriented submanifolds $Z\subset T$ of codimension $s$, we have a closed \emph{current of integration}
	\[
	\delta_Z \in \mathcal{D}^s_T
	\]
	defined by  
	\[
	\delta_Z(\omega):=\int_Z \omega.
	\]
	In line with our discussion above, reversing the orientation of $Z$ turns $\delta_Z$ into $-\delta_{-Z}$. Further, a current $\omega\in \mathcal{D}^{1}_T$ having residue $\mathcal{C}\in H^0(T[c])$ along the residue map
	\[
	H^{1}(T-T[c],\mathbb{R}) \to H^0(T[c])
	\]
	is equivalent to $d\omega = \delta_\mathcal{C}$ (where this means the linear combination of the currents of integration along points in the support of $\mathcal{C}$ corresponding to their coefficients). See for example \cite[(3.3)]{X} from the author's thesis: in general, \cite[\S3.2.2]{X} contains more details on the distributional de Rham complex along with proofs (or references to original proofs).
	
	Since the distributional de Rham complex is a functorial complex computing ordinary real cohomology, the equivariant cohomology of $T$ can thus be computed by the double complex $C^\bullet(\Gamma, \mathcal{D}^\bullet_T)$, where $\Gamma\subset \SL_2(\Z)$ acts on $T$-currents by pushforward. Analogously to the non-equivariant case, then, an element $\omega$ of this double complex restricts to a representative of a class in $H^{1}_\Gamma(T-T[c])$ with residue 
	\[
	[T[c]-c^2\{0\}]\in H^0(T[c])^\Gamma
	\]
	if and only if the total differential of $\omega$ is $\delta_{T[c]}-c^2\delta_0 \in C^0(\Gamma, \mathcal{D}^2_T)$. In particular, if we can find such a class which is invariant by $[a]_*$ for all integers $a$ relatively prime to $c$, then it will represent the class
	\[
	z_\Gamma^{(c)} \in H^{1}_\Gamma(T-T[c], \mathbb{R}).
	\]

	\subsection{Bernoulli polynomials}
	
	We now know that in principle, one can compute Eisenstein classes by finding suitable lifts inside the distributional de Rham complex.
	
	These elements will be built out of the periodic \emph{Bernoulli polynomials} of weight $1$
	\[
	B_1(z):= \{z\}-\frac{1}{2}
	\]
	where $\{z\}$ is the fractional part of $z$, i.e. its unique representative in $[0,1)$ modulo $1$, and weight-$2$
	\[
	B_2(z)=\{z\}^2 - \{z\}+\frac{1}{6}.
	\]
	As periodic functions, these Bernoulli polynomials can be viewed as defined on $\R/\Z$. Clearly, $B_1$ is not smooth, or even continuous; its graph is in the shape of a sawtooth. The weight-two $B_2$, meanwhile, is continuous, since $0^2-0+1/6=1^2-1+1/6$, and smooth away from the identity in $S^1$. Also away from the identity, it is easy to see that $B_2'(z)=2B_1(z)$. Furthermore, these polynomials satisfy the distribution relations 
	\[ B_1(z) = [a]_*B_1(z) := \sum_{z'\in [a]^{-1}z} B_1(z) \]
	\[
	B_2(z) = a [a]_*B_2(z) := a \sum_{z'\in [a]^{-1}z} B_2(z)
	\]
	for all $a\in \mathbb{N}$, as can be easily verified by hand.
	
	Despite not being smooth as functions, $B_1(z)$ and $B_2(z)$ can be considered as smooth $0$-currents on all of $S^1$, in the sense that
	\[
	\alpha \mapsto \int_{S^1} B_i(z) \alpha
	\]
	is a well-defined functional on smooth $1$-forms $\alpha$ on $S^1$, for $i=1,2$. Considered currents, we have that
	\[
	dB_1 = dz - \delta_0,
	\]
	\[
	dB_2 = 2 B_1\, dz
	\]
	the former of which can be verified by computing their Fourier coefficients.
	
	We now construct a certain element $\theta$ of total degree $1$ in $C^\bullet(\Gamma, \mathcal{D}_T^\bullet)$, i.e. in 
	\[
	C^0(\Gamma, \mathcal{D}_T^1) \oplus C^1(\Gamma, \mathcal{D}_T^0)
	\]
	such that its total differential is
	\[
	\delta_0 - dz_1\wedge dz_2 \in C^0(\Gamma, \mathcal{D}_T^2).
	\]
	In particular, we set
	\[
	C^0(\Gamma, \mathcal{D}_T^1) \cong \mathcal{D}_T^1 \ni \theta^{0,1} := B_1(z_1)\, \delta_{z_2=0} - B_1(z_2)\, dz_1
	\]
	Then we can compute that the derivative of this element as a current is 
	\[
	\delta_0 - dz_1\wedge dz_2
	\]
	as desired, so to define $\theta^{1,0}\in C^1(\Gamma, \mathcal{D}_T^0)$, it remains to find a lift of
	\[
	\partial_\Gamma \theta^{0,1} = \left( \gamma\mapsto (\gamma-1) [ B_1(z_1)\, \delta_{z_2=0} - B_1(z_2)\, dz_1 ]\right).
	\]
	Before discussing how to find these lifts, we observe the relation of any such lift to the Eisenstein class. The key observation is that the class $\theta^{0,1}$
	is trace-fixed, i.e. $[a]_*\theta^{0,1}=\theta^{0,1}$ for all $a\in \N$. 
	Then if we can find a lift $\theta^{1,0}$ of $\partial_\Gamma \theta^{0,1}$ which is similarly trace-fixed, we can conclude: 
	\begin{prop} \label{prop:comp}
		Given a such a trace-fixed element $\theta = \theta^{1,0}+\theta^{0,1}$ with $d_{tot}\theta = \delta_0 - dz_1\wedge dz_2$ as above, we have for any integer $c>1$ that
		\[
		[([c]^*-c^2)\theta] = z_{\Gamma}^{(c)} \in H^1_\Gamma(T-T[c]).
		\]
	\end{prop}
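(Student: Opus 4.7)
My plan is to apply the characterization of $z_\Gamma^{(c)}$ recalled just before the proposition: any total-degree-$1$ element $\omega$ of $C^\bullet(\Gamma, \mathcal{D}_T^\bullet)$ whose total differential is $\delta_{T[c]} - c^2\delta_0$ and which is $[a]_*$-invariant for all $a\in \N^{(c)}$ restricts to a representative of $z_\Gamma^{(c)}$ on $T-T[c]$. It thus suffices to verify both properties for $\omega = ([c]^*-c^2)\theta$.

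For the total differential, I use that $d_{tot}$ is $\R$-linear and commutes with the chain map $[c]^*$, yielding
\[
d_{tot}([c]^*-c^2)\theta = ([c]^*-c^2)(d_{tot}\theta) = ([c]^*-c^2)(\delta_0 - dz_1\wedge dz_2).
\]
Since $[c]$ is a local diffeomorphism with $[c]^{-1}(0) = T[c]$, we have $[c]^*\delta_0 = \delta_{T[c]}$; and since $dz_1\wedge dz_2$ is a translation-invariant top form on $T$, $[c]^*(dz_1\wedge dz_2) = c^2\,dz_1\wedge dz_2$. Substituting, the top-form contributions cancel exactly, leaving $\delta_{T[c]} - c^2\delta_0$, as desired.

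For the $\N^{(c)}$-trace invariance, since $\theta$ is trace-fixed by hypothesis, the identity $[a]_*([c]^*-c^2)\theta = ([c]^*-c^2)\theta$ reduces to showing that $[a]_*$ and $[c]^*$ commute on the complex for $a\in \N^{(c)}$. Dually, this amounts to $[c]_*[a]^* = [a]^*[c]_*$ on smooth forms, which can be checked pointwise: at $z\in T$, the left side sums a form over $\{w : cw = z\}$ precomposed with $[a]$, while the right sums directly over $\{v : cv = az\}$, and the map $w\mapsto aw$ is a bijection between these two fibers precisely because $\gcd(a,c)=1$ makes multiplication by $a$ an automorphism of $T[c]$.

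Granted both properties, the claim follows from the uniqueness recalled before the statement. The only step requiring any care is the commutation of $[a]_*$ with $[c]^*$, but since these are both orientation-preserving endomorphisms of $T$, there are no sign subtleties of the kind flagged in the orientation footnote of the preceding subsection; everything else is a direct computation.
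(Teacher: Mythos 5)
Your proposal is correct and follows exactly the route the paper takes: verify that $([c]^*-c^2)\theta$ has total differential $\delta_{T[c]}-c^2\delta_0$ and is $[a]_*$-fixed for $(a,c)=1$, then invoke the characterization of $z_\Gamma^{(c)}$ recalled just before the statement. The paper's own proof simply asserts these two facts, whereas you supply the (correct) computations $[c]^*\delta_0=\delta_{T[c]}$, $[c]^*(dz_1\wedge dz_2)=c^2\,dz_1\wedge dz_2$, and the fiberwise bijection giving $[a]_*[c]^*=[c]^*[a]_*$ when $\gcd(a,c)=1$.
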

	\begin{proof}
		From the preceding discussion, we see that $([c]^*-c^2)\theta$ is an element with total differential $\delta_{T[c]}-c^2\delta_0$, which is fixed by $[a]_*$ for all $(a,c)=1$. We hence conclude its restriction to $T-T[c]$ is a closed (under $d_{tot}$) element which satisfies the properties characterizing $z_{\Gamma}^{(c)}$, and hence represents it in cohomology.
	\end{proof}
	
	\subsection{Finding lifts}
	In this section, we will find a definition of $\theta^{1,0}$ by finding suitable lifts of $\gamma \partial_\Gamma \theta^{0,1}$, and see how this leads directly to formulas for the periods of classical Eisenstein series.
	
	We begin by noting that
	\begin{equation} \label{eq:recur}
		(\gamma_2\gamma_1-1)\theta^{0,1} = \gamma_2(\gamma_1-1)\theta^{0,1} + (\gamma_2-1) \theta^{0,1}
	\end{equation}
	so the problem of finding a lift for $\Gamma$ can be reduced to finding preimages of 
	\[
	(\gamma-1)(B_1(z_1)\, \delta_{z_2=0} - B_1(z_2)\, dz_1)
	\]
	for $\gamma$ running over a set generators of $\Gamma$.
	
	\subsection{Lifts for $\SL_2(\Z)$ and classical Dedekind-Rademacher formulas} \label{section:sl2z}
	
	We therefore consider the case of $\Gamma = \SL_2(\Z)$, which is famously generated by the two matrices
	\[
	S = \begin{pmatrix}0& -1 \\ 1 & 0\end{pmatrix}, T= \begin{pmatrix} 1 & 1 \\ 0 & 1 \end{pmatrix}.
	\]
	For the matrix $S$, we find that 
	\begin{align}
		(S_*-1)( B_1(z_1)\, \delta_{z_2=0} - B_1(z_2)\, dz_1 )  & = B_1(z_2)\, \delta_{z_1=0} - B_1(z_1)\, \delta_{z_2=0} + B_1(z_1)\,dz_2 + B_1(z_2)\, dz_1 \\ & = d(-B_1(z_1)B_1(z_2))
	\end{align}
	and hence can take $-B_1(z_1)B_1(z_2)$ as our preimage.
	
	For the matrix $T$, we find that 
	\begin{align}
		(T_*-1)(B_1(z_1)\, \delta_{z_2=0} - B_1(z_2)\, dz_1)  & = B_1(z_2)d(-z_1+z_2) + B_1(z_2)dz_1 \\
		& = B_1(z_2)dz_2 \\
		& = d(B_2(z_1)/2).
	\end{align}
	Analogously, for $T^b$, for any $b\in \Z$, we get that $bB_2(z_1)/2$ is a lift of
	\[
	(T^b_*-1)(B_1(z_1)\, \delta_{z_2=0} - B_1(z_2)\, dz_1 ). 
	\]
	In principle, then, by expressing $\gamma \in \SL_2(\Z)$ in the form 
	\[
	ST^{b_1}ST^{b_2}\ldots ST^{b_k}
	\]
	for some finite string of nonnegative integers $b_1,\ldots,b_k$ (positive except for possibly $b_k$), we can recursively find the value of $\theta^{0,1}$ for $\gamma$ using \eqref{eq:recur}, and therefore compute
	\[
	\int_{\tau_0}^{\gamma \tau_0} E_{2}^{\alpha,\beta}(\tau)\, d\tau 
	\]
	if $\gamma$ fixes $(\alpha,\beta)\in \Z/N$ for some integer $N>1$. However, this expression would become quite involved due to its recursive definition. The intricacies can be packaged into a formalism of continued fractions, as in \cite{KM}, which we do not reiterate here. 
	
	Instead, we will follow a different approach to the definition of $\theta^{1,0}$ not requiring arbitrary-length decompositions of matrices.
	
	\subsection{Lifts for $\SL_2(\Q)$} \label{section:sl2q}
	
	To obtain the classical formulas \eqref{eq:eis}, we actually extend the range of our cocycle. The key observation is that all the expressions involved, from the original class 
	\[
	\delta_0 - dz_1\wedge dz_2,
	\]
	to the lifted class
	\[
	B_1(z_1)\, \delta_{z_2=0} - B_1(z_2)\, dz_1
	\]
	to the explicit lifts for $S$ and $T^b$
	\[
	- B_1(z_1)B_1(z_2),\,  bB_2(z_2)/2,
	\]
	are all invariant under the scalar pushforwards $[a]_*$ for any $a\in \N \setminus \{0\}$. For any module $M$ equipped with such a monoid action of $\N^\times$, we write $M^{(0)}$ for this \emph{trace-fixed} part; in particular, we can apply this to the pushforward action on the distributional de Rham complex of $T$. For this complex, we can observe:
	\begin{prop}
		The trace-fixed complex
		\[
		0\to (\mathcal{D}^{0}_T)^{(0)} \to (\mathcal{D}^{1}_T)^{(0)} \to (\mathcal{D}^{2}_T)^{(0)}
		\]
		is left-exact.
	\end{prop}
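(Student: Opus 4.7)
The plan is to verify the two conditions that together comprise left-exactness: first, injectivity of $d\colon (\mathcal{D}_T^0)^{(0)}\to (\mathcal{D}_T^1)^{(0)}$, and second, exactness at $(\mathcal{D}_T^1)^{(0)}$. Injectivity is immediate from the fact that the distributional de Rham complex computes $H^\bullet(T,\R)$, so the kernel of $d$ on $\mathcal{D}_T^0$ consists of the constants. For a constant $\lambda$, the computation $([a]_*\lambda)(\eta) = \lambda([a]^*\eta) = \lambda\int_T [a]^*\eta = a^2\lambda\int_T\eta$ (using that $[a]$ is an $a^2$-fold covering) shows $[a]_*\lambda = a^2\lambda$ as a $0$-current. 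Trace-fixedness for any single $a\geq 2$ therefore forces $\lambda = 0$.

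For exactness at degree $1$, suppose $\omega\in (\mathcal{D}_T^1)^{(0)}$ is closed. Its cohomology class lies in $H^1(T,\R) = \R[dz_1]\oplus \R[dz_2]$, and trace-fixedness of $\omega$ makes $[\omega]$ trace-fixed in $H^1$. A direct computation analogous to the one above shows that $[a]_*$ acts on each $[dz_i]$ by multiplication by $a$, so the trace-fixed part of $H^1(T,\R)$ vanishes. We conclude $\omega = dT_0$ for some $T_0\in\mathcal{D}_T^0$, though $T_0$ need not itself be trace-fixed.

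To remedy this, I would set $c_a := [a]_* T_0 - T_0$. Since $dT_0 = \omega$ is trace-fixed, $c_a$ is closed, hence a constant in $\R$. Monoid composition $[ab]_* = [a]_* \circ [b]_*$ yields the cocycle relation $c_{ab} = c_a + a^2 c_b$, and applying this symmetrically with $ab = ba$ gives the compatibility $c_a(1-b^2) = c_b(1-a^2)$. This lets us define a single constant $\mu := c_a/(1-a^2)$ independent of $a \geq 2$; then $T_0 + \mu$ is the desired trace-fixed primitive of $\omega$. The main obstacle is precisely this last cocycle-coboundary step, which amounts to a vanishing statement for $H^1$ of the abelian monoid $\N^\times$ acting on $\R$ by $a\cdot\lambda := a^2\lambda$.
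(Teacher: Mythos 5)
Your argument is correct and is essentially the one the paper outsources to \cite[Lemma 6.2.1]{SV}: the only real input is that $[a]_*$ acts on $H^i(T,\R)$ by multiplication by $a^{2-i}$, so the trace-fixed cohomology vanishes below top degree, after which one corrects the primitive by a constant to make it trace-fixed. Your version simply makes this self-contained, and the final cocycle-to-coboundary step (the relation $c_{ab}=c_a+a^2c_b$ forcing $\mu=c_a/(1-a^2)$ to be well-defined) is handled correctly.
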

	\begin{proof}
		This can be proven identically to \cite[Lemma 6.2.1]{SV}, as the only trace-fixed cohomology of $T$ is in top degree.\footnote{In fact, one can show using Fourier series that the complex is almost right-exact as well, except for one dimension of cohomology on the right, but this is not necessary for our purposes. For details, see \cite{RX}, which generalizes much of this background to general-dimension tori.}
	\end{proof}
	Then the trace-fixed complex actually takes an action of $\GL_2^+(\Q)$ and not just $\SL_2(Z)$, since if we write for any $M\in \GL_2^+(\Q)$
	\[
	M = [a]^{-1} M'
	\]
	where $M'$ is an integral matrix, it is well-defined to give the action of $M$ on $(\mathcal{D}_T^i)^{(0)}$ as simply coinciding with that of $(M')_*$, since all scalars act trivially. Correspondingly, we can set $\Gamma = \GL_2^+(\Q)$, and ask for $\theta^{1,0}$ to lift 
	\[
	(\gamma - 1)(B_1(z_1)\, \delta_{z_2=0} - B_1(z_2)\, dz_1 )
	\]
	for any rational invertible orientation-preserving matrix $\gamma \in \GL_2^+(\Q)$.
	
	The upshot of making our group larger is that we have the \emph{Bruhat decomposition}
	\[
	\GL_2^+(\Q) = B\begin{pmatrix}0 & -1 \\ 1 & 0\end{pmatrix} B
	\]
	where $B\le \GL_2^+(\Q)$ is the upper triangular Borel subgroup. This Borel subgroup has the further decomposition
	\[
	B = U \rtimes D
	\]
	where 
	\[
	U = \begin{pmatrix} 1 & \bullet \\ 0 & 1 \end{pmatrix}, D= \begin{pmatrix} \bullet & 0 \\ 0 & \bullet \end{pmatrix}
	\]
	are respectively the upper unipotent and diagonal torus subgroups of $\GL_2^+(\Q)$. By taking advantage of these decompositions, then, we can turn the lifting problem for a general element of $\GL_2^+(\Q)$ into a problem for these three simple types of matrices: for the matrix
	\[
	S= \begin{pmatrix}0 & -1 \\ 1 & 0\end{pmatrix}
	\]
	we have already done this. For 
	\[
	\upsilon_b := \begin{pmatrix}1 & b \\ 0 & 1\end{pmatrix}
	\]
	for any $b\in \Q$, we can compute similarly to the case of $b\in \Z$ (in which case $\upsilon_b = T^b$) that a lift is given by
	\[
	bB_2(z_2)/2.
	\]
	Finally, any diagonal matrix in $D$ fixes $B_1(z_1)\, \delta_{z_2=0} - B_1(z_2)\, dz_1 $, so the corresponding lift is zero.
	
	This, as in the previous section, makes it in \emph{in principle} possible to write down the value of $\theta^{1,0}$ for any element of $\GL_2^+(\Q)$, but to obtain the classical formulas \eqref{eq:eis} we need an \emph{explicit} version of the Bruhat decomposition:
	For any 
	\[
	\gamma = \begin{pmatrix}
		a& b \\ 0 & d
	\end{pmatrix}\in B
	\]
	we can write
	\[
	\begin{pmatrix}
		a& b \\ 0 & d
	\end{pmatrix} = \begin{pmatrix}
		1& b/d \\ 0 & 1
	\end{pmatrix}\begin{pmatrix}
		a& 0 \\ 0 & d
	\end{pmatrix}.
	\]
	In the other Schubert cell, for 
	\[
	\gamma = \begin{pmatrix}
		a& b \\ c & d
	\end{pmatrix}\in \GL_2^+(\Q) - B,
	\]
	with $c\ne 0$, we can write
	\[
	\begin{pmatrix}
		a& b \\ c & d
	\end{pmatrix} = \begin{pmatrix}
		1 & a\cdot \text{sgn}(c) \\ 0 & |c|
	\end{pmatrix} \begin{pmatrix}
		0& -1 \\ 1 & 0
	\end{pmatrix} \begin{pmatrix}
		\text{sgn}(c)& 0 \\ 0 & \text{sgn}(c)
	\end{pmatrix}\begin{pmatrix}
		1& d/c \\ 0 & \frac{\det \gamma }{|c|}
	\end{pmatrix}
	\]
	
	Let us see how the formulas \eqref{eq:eis} now follow from these explicit decompositions. For $\gamma\in B$, we find that
	\[
	\theta^{1,0}(\gamma)  = \theta^{1,0}\begin{pmatrix}    1& b/d \\ 0 & 1
	\end{pmatrix} + \begin{pmatrix}
		1& b/d \\ 0 & 1
	\end{pmatrix}_* \theta^{1,0}\begin{pmatrix}
		a& 0 \\ 0 & d
	\end{pmatrix} = \theta^{1,0}\begin{pmatrix}    1& b/d \\ 0 & 1
	\end{pmatrix}  = -\frac{b}{2d} B_2(z_2)
	\]
	
	For $\gamma \in \GL_2^+(\Q) - B$, we use the further decompositions
	\[
	\begin{pmatrix}
		1 & a\cdot \text{sgn}(c) \\ 0 & |c|
	\end{pmatrix} =  \begin{pmatrix}
		1& a/c \\ 0 & 1 
	\end{pmatrix} \begin{pmatrix}
		1& 0 \\ 0 & |c|
	\end{pmatrix} 
	\]
	and
	\[
	\begin{pmatrix}
		1& d/c \\ 0 & \frac{\det \gamma}{|c|}
	\end{pmatrix} = \begin{pmatrix}
		1& \frac{d\cdot \text{sgn}(c)}{\det \gamma} \\ 0 & 1 
	\end{pmatrix} \begin{pmatrix}
		1& 0 \\ 0 & \frac{\det \gamma}{|c|}
	\end{pmatrix}
	\]
	coming from $B = U \rtimes D$.
	
	Recalling that the positive diagonal torus $\Q_{>0}^\times \times \Q_{>0}^\times\subset \GL_2(\Q)$ acts trivially on $B_1(z_1)\, \delta_{z_2=0} - B_1(z_2)\, dz_1$, using the recursive principle \eqref{eq:recur} we find using our explicit Bruhat decomposition that the lift $\theta^{1,0}(\gamma)$ can be written as the sum of three terms:
	\begin{enumerate}
		\item The lift 
		\[
		\theta^{1,0}(\upsilon_{a/c})= \frac{a}{2c} B_2(z_2).
		\]
		\item The lift 
		\[
		\begin{pmatrix}
			1 & a\cdot \text{sgn}(c) \\ 0 & |c|
		\end{pmatrix}_* \theta^{1,0}(S) = - \begin{pmatrix}
			1 & a\cdot \text{sgn}(c) \\ 0 & |c|
		\end{pmatrix}_* B_1(z_1)B_1(z_2).
		\]
		\item The lift 
		\[
		\begin{pmatrix}
			1 & a\cdot \text{sgn}(c) \\ 0 & |c|
		\end{pmatrix}_* S_* \theta^{1,0}(\upsilon_{d/\det \gamma}) = \frac{d}{2\det \gamma} \begin{pmatrix}
			a\cdot \text{sgn}(c) & -1 \\ |c| & 0 
		\end{pmatrix}_* B_2(z_2)
		\]
	\end{enumerate}
	
	\subsection{Comparison of pullbacks}
	
	Let us see how the lift $\theta^{1,0}$ of the previous section recovers the original formula \eqref{eq:eis} for matrices in $\SL_2(\Z)$. Our idea is that if we have a $\Gamma$-fixed torsion section $(\alpha, \beta): * \hookrightarrow T$, we would like to compare the evaluation $(\alpha,\beta)^*\theta^{1,0}$ with the period $E_{\alpha,\beta}^2$, by passing via the cohomology class $_{c}z_\Gamma$ which both are related to.
	
	The problem is that $\theta^{1,0}$ is valued in $0$-currents rather than functions, whose evaluation at points is not in general well-defined: currents cannot be pulled back by closed inclusions in general. Thus, we cannot even a priori evaluate our cocycle at $(\alpha,\beta)$, let alone compare the result with the analogous pullback of the Eisenstein cohomology class $_{c}z_\Gamma$. 
	
	The first technical tool we need to remedy this is the introduction of a variant of the distributional de Rham complex: For any $\Gamma\subset \GL_2^+(\Q)$, let $H_\Gamma\subset T$ be the $\Gamma$-orbit of the lines $\{z_1=i/c\}$ and $\{z_2=i/c\}$ inside $T$ as $i$ varies over $\{0,1,\ldots, i-1\}$; this is a union of infinitely many subtori translated by $c$-torsion points. For any finite subarrangement $H\subset H_\Gamma$, we define a complex $\mathcal{D}_{T,H}^\bullet$ via the pullback square 
	\begin{equation}
		\begin{tikzcd}
			\mathcal{D}_{T,H}^\bullet \arrow[r] \arrow[d] & \mathcal{D}_{T}^\bullet \arrow[d]\\
			\Omega_{T-H}^\bullet \arrow[r] & \mathcal{D}_{T-H}^\bullet
		\end{tikzcd}
	\end{equation}
	which results in an identification of $\mathcal{D}_{T,H}^i$ with the $i$-currents such that their restriction to $T-H$ are given by smooth $i$-forms. Here, the bottom horizontal map is the earlier-defined inclusion, and the right vertical map is the restriction dual to the pushforward of compactly-supported differential forms. We define then
	\[
	\mathcal{D}_{T,H_\Gamma}^\bullet := \varinjlim_H \mathcal{D}_{T,H}^\bullet 
	\]
	where the limit runs over $H$ finite subarrangements of $H_\Gamma$, along the natural inclusion maps. The group $\Gamma$ permutes the pullback diagrams for each $H$ (sending it to that of $\gamma H$), and these assemble to give a pushforward action on $\mathcal{D}_{T,H_\Gamma}^\bullet$. Further, because the bottom row in each pullback diagram is a quasi-isomorphism, we see that $\mathcal{D}_{T,H_\Gamma}^\bullet$ computes the cohomology of $T$, just as $\mathcal{D}_{T}^\bullet$ does. Furthermore, analogously to the full distributional de Rham complex, we have a left exact sequence
	\[
	0 \to (\mathcal{D}_{T,H_\Gamma}^0)^{(0)}\to (\mathcal{D}_{T,H_\Gamma}^1)^{(0)}\to (\mathcal{D}_{T,H_\Gamma}^2)^{(0)}
	\]
	The important new phenomenon for us is that if $\Gamma$ fixes a point $(\alpha,\beta)\in T$ not lying in $H_\Gamma$, then there is a composite pullback map
	\[
	\mathcal{D}_{T,H_\Gamma}^0\to \varinjlim_H \Omega^0_{T-H} \xrightarrow{(\alpha, \beta)^*} \R
	\]
	which induces the pullback $(\alpha,\beta)^*$ on the level of real cohomology. One consequence of this exactness is that the required property of $\theta^{1,0}$, i.e. that of lifting
	\[
	\gamma \mapsto (\gamma-1) (\theta^{0,1}),
	\]
	must characterize it uniquely up to coboundaries if we insist that it be valued in trace-fixed $0$-currents. 
	
	With these ingredients, we are now able to prove the central comparison result:
	\begin{thm}
		If $\Gamma\subset \GL_2(\Q)^+$ fixes $(\alpha,\beta)\in T[N]\setminus \{0\}$, then for all $\gamma \in \Gamma$,
		\[
		\frac{1}{2} \Phi_{\alpha, \beta}(\gamma) = (\alpha,\beta)^*\theta^{1,0}(\gamma)
		\]
		where here, we somewhat abusively consider $\theta^{1,0}$ to be valued in the \emph{functions} given by the formulas in section \ref{section:sl2q} to make sense of the pullback in general.\footnote{This theorem still holds if we define the lift $\theta^{1,0}$ according to the formulas in section \ref{section:sl2z} instead, with the same proof. Since our main focus is the lifts taking advantage of the Bruhat decomposition, we choose to phrase everything in terms of the section \ref{section:sl2q} formulas.}
	\end{thm}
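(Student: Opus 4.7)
The plan is to show that both $\frac{1}{2}\Phi_{\alpha,\beta}$ and $(\alpha,\beta)^*\theta^{1,0}$ are homomorphisms $\Gamma\to\R$ representing the same cohomology class in $H^1(\Gamma,\R)$. Since $\Gamma$ acts trivially on $\R$, the space of $1$-coboundaries vanishes, so cohomologous $1$-cocycles coincide pointwise as functions on $\Gamma$. That $(\alpha,\beta)^*\theta^{1,0}$ is a genuine $1$-cocycle follows because the vanishing of the $C^2(\Gamma,\mathcal{D}^0)$-component of $d_{\mathrm{tot}}\theta = \delta_0 - dz_1\wedge dz_2$ forces $\partial_\Gamma\theta^{1,0}=0$; any constant ambiguity arising from working in the trace-fixed enhanced complex is absorbed by its left-exactness. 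The basepoint-independence of $\Phi_{\alpha,\beta}$ recalled in the introduction implies it too is a homomorphism.

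The core step is to compute $[(\alpha,\beta)^*\theta^{1,0}]\in H^1(\Gamma,\R)$ via the Eisenstein class $z_\Gamma^{(c)}$. Fix $c>1$ with $c\equiv 1\pmod N$, so that $(c\alpha,c\beta)=(\alpha,\beta)$, and enlarge $H_\Gamma$ if necessary so that $(\alpha,\beta)\notin H_\Gamma$; interpret $\theta$ as living in $C^\bullet(\Gamma,\mathcal{D}^\bullet_{T,H_\Gamma})$. By Proposition~\ref{prop:comp}, $([c]^*-c^2)\theta$ represents $z_\Gamma^{(c)}\in H^1_\Gamma(T-T[c],\R)$. The pullback $(\alpha,\beta)^*\colon \mathcal{D}^0_{T,H_\Gamma}\to\R$ annihilates $\theta^{0,1}$ (which is a $1$-current) and so sends $([c]^*-c^2)\theta$ to $((c\alpha,c\beta)^*-c^2(\alpha,\beta)^*)\theta^{1,0} = (1-c^2)(\alpha,\beta)^*\theta^{1,0}$ in $C^1(\Gamma,\R)$. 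The \cite{BCG} identification recalled in the introduction gives $(\alpha,\beta)^*\frac{1}{1-c^2}[z_\Gamma^{(c)}] = \frac{1}{2}[E_2^{\alpha,\beta}]$, whence $[(\alpha,\beta)^*\theta^{1,0}]=\frac{1}{2}[E_2^{\alpha,\beta}]$ in $H^1(\Gamma,\R)$.

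To conclude, I invoke the standard de Rham-to-group cohomology isomorphism, which sends a closed $\Gamma$-invariant $1$-form $\omega$ on $\mathcal{H}$ to the cocycle $\gamma\mapsto\int_{\tau_0}^{\gamma\tau_0}\omega$. Applied to $\omega=E_2^{\alpha,\beta}\,d\tau$ this is exactly $\Phi_{\alpha,\beta}$, so $[\Phi_{\alpha,\beta}]=[E_2^{\alpha,\beta}]$ in $H^1(\Gamma,\R)$. Combined with the preceding paragraph, $\frac{1}{2}\Phi_{\alpha,\beta}$ and $(\alpha,\beta)^*\theta^{1,0}$ are cohomologous homomorphisms $\Gamma\to\R$, hence equal as functions.

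The main obstacle is making the pullback $(\alpha,\beta)^*\theta^{1,0}$ rigorous as a map of cocycles: the $0$-current values $bB_2(z_2)/2$, $-B_1(z_1)B_1(z_2)$, and their $\GL_2^+(\Q)$-translates from Section~\ref{section:sl2q} are only smooth away from the $\Gamma$-orbit of the torsion lines, so pointwise evaluation at $(\alpha,\beta)$ must be interpreted through the enhanced complex $\mathcal{D}^\bullet_{T,H_\Gamma}$. The degenerate cases where $(\alpha,\beta)$ would meet one of these lines (e.g., $\alpha=0$ or $\beta=0$) require enlarging $H_\Gamma$ further or a small limiting argument exploiting the continuity of $B_2$; tracking signs and normalizations through the BCG comparison and the de Rham-to-group-cohomology isomorphism is a secondary but nontrivial bookkeeping task.
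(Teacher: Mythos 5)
Your argument for the case where both coordinates of $(\alpha,\beta)$ are nonzero is essentially the paper's: pull back $([c]^*-c^2)\theta$ through the enhanced complex $\mathcal{D}^\bullet_{T,H_\Gamma}$, identify the resulting class with $\frac{1}{2}[E_2^{\alpha,\beta}]$ via the \cite{BCG} normalization, and use that $1$-cocycles valued in a module with trivial action are determined by their cohomology class. One quibble there: you cannot ``enlarge $H_\Gamma$ if necessary so that $(\alpha,\beta)\notin H_\Gamma$'' --- enlarging the arrangement only makes containment more likely. What is needed, and what the paper checks, is that the arrangement as defined already misses $(\alpha,\beta)$, which holds precisely when $\alpha\ne 0$ and $\beta\ne 0$.

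The genuine gap is the degenerate case $\alpha=0$ or $\beta=0$, which you defer to ``enlarging $H_\Gamma$ further or a small limiting argument exploiting the continuity of $B_2$.'' Neither works. In that case $(\alpha,\beta)$ lies on the actual discontinuity locus of the $\GL_2^+(\Q)$-translates of $B_1(z_1)B_1(z_2)$ appearing in $\theta^{1,0}(\gamma)$, and those terms are discontinuous there (the continuity of $B_2$ is irrelevant to them); no choice of arrangement avoids this. The cohomological argument controls $\theta^{1,0}$ only as a current, i.e.\ up to modification on a measure-zero set, so it says nothing a priori about the value of the function formulas at such a point; equivalently, the cocycle identity $\partial_\Gamma\theta^{1,0}=0$, which you correctly derive at the level of currents from left-exactness, is not automatically a pointwise identity of functions at points of discontinuity. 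The paper closes exactly this gap with two further ingredients: Lemma \ref{lem:inj}, the injectivity of the module $M$ of Bernoulli functions into $0$-currents on $T-\{0\}$ (proved by an averaging-of-one-sided-limits argument, and genuinely false on all of $T$, as the footnoted identity there shows), which pins down the function-valued cochain from its current-valued image; and a Shapiro's-lemma argument packaging the evaluations at all primitive $N$-torsion points into a single cocycle valued in $\mathrm{Ind}_{\Gamma_1(x_0)}^{\SL_2(\Z)}\Q$, so that the already-established case of a basepoint $x_0$ with two nonzero coordinates propagates to the degenerate points. Without something playing the role of these two steps, your proof covers only the nondegenerate torsion sections.
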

	\begin{proof}
		We begin by treating only the case where either $\alpha\ne 0$ or $\beta\ne 0$, as this is easier. In this case, we see that in fact according to our formulas,
		\[
		([c]^*-c^2)\theta^{1,0}(\gamma) \in (\mathcal{D}^0_{T,H_\Gamma})^{(0)}
		\]
		for all $\gamma\in \Gamma$, since its derivative is of the form
		\[
		(\gamma-1) (B_1(z_1)\, \delta_{z_2=0} - B_1(z_2)\, dz_1)
		\]
		which corresponds to a smooth $1$-form once restricted away from $H_\Gamma$. Then since $(\alpha,\beta)\not\in H_\Gamma$, we conclude that it makes sense to consider the pullback $(\alpha,\beta)^*([c]^*-c^2)\theta$; since this induces the pullback on cohomology, we have
		\[
		\frac{1}{2}\Phi_{\alpha,\beta}(\gamma)=(\alpha,\beta)^*\frac{1}{1-c^2} z_{\Gamma}^{(c)} = \frac{1}{1-c^2}(\alpha,\beta)^*([c]^*-c^2)\theta(\gamma)=(\alpha,\beta)^*\theta(\gamma)
		\]
		for any $c$ such that $(c\alpha,c\beta)=(\alpha,\beta)$, i.e. $c\equiv 1 \pmod{N}$. For these $(\alpha,\beta)$, the result follows, since the only non-vanishing component of $\theta$ after pullback is $\theta^{1,0}$.
		
		In the other case when either $\alpha=0$ or $\beta=0$, the problem is that though the formulas in section \ref{section:sl2q} when treated as \emph{functions} make sense to evaluate at $(\alpha,\beta)$, since they are not in the locus of smoothness, this evaluation has no \emph{a priori} cohomological meaning: heuristically, the currents ``do not know'' about the values of functions like $B_1(z)$ at the boundary points where they are discontinuous. To remedy this, we have the following lemma:
		
		\begin{lem} \label{lem:inj}
			There is an injection
			\[
			M\hookrightarrow (\mathcal{D}^0_{T,H_\Gamma})^{(0)}
			\]
			where $M$ is defined to be the module of functions on $T-\{0\}$ spanned by the $\GL_2^+(\Q)$-orbit (under pushforward) of $B_2(z_2)$ and $B_1(z_1)B_1(z_2)$, given by sending
			\[
			f\mapsto \left(\eta\mapsto \int_T f \eta\right).
			\]
		\end{lem}
		\begin{proof}[Proof of lemma]
			The only property of this inclusion whose proof is not fully analogous to the proof for the inclusion of smooth $0$-forms is the injectivity: whereas for smooth forms it follows by using a partition of unity to work locally, this is far from true in general if we weaken the continuity assumptions.\footnote{For example, it fails even if we replace $T-\{0\}$ by $T$ in this very lemma, thanks to the identity
				\[
				2(B_1(z_1)B_1(z_2)+B_1(z_2)(B_1(-z_1-z_2))+B_1(-z_1-z_2)B_1(z_1))+B_2(z_1)+B_2(z_2)+B_2(-z_2-z_2)=0
				\]
				which holds identically away from the zero section (where the left-hand side evaluates to $1/2$). This nonzero linear combination of Bernoulli polynomials in $M$ therefore gives rise to the zero current.}
			
			Noting that the loci of discontinuity of functions in $M$ can only be along codimension-$1$ subtori embedded as subgroups, coming from terms of the form $B_1(z_1)B_1(z_2)$ and its translates. We note the following property of this function: if $x=(x_1,x_2)\in T$ is a point lying on one of the subtori of discontinuity $S\subset T$  (so $x_1=0$ or $x_2=0$) but not equal to zero, then $S$ divides any sufficiently small neighborhood around $z$ into two connected components, which we label $+$ and $-$ arbitrarily. Then we observe that the limits from these two components
			\[
			L_+=\lim_{z\to x^+} B_1(z_1)B_1(z_2),L_-=\lim_{z\to x^-} B_1(z_1)B_1(z_2)
			\]
			exist, and furthermore that they average to zero. By moving this argument around by the general linear action, this applies to \emph{any} nonzero point on a codimension $1$ discontinuity stratum of a function in the orbit of $B_1(z_1)B_1(z_2)$.
			
			Now consider an arbitrary $f\in M$, and suppose that $f$ gives the trivial $0$-current when considered as a kernel of integration. Then, $f$ must be identically zero outside a finite union of subtori through the identity. Consider an arbitrary nonzero point $x$ on one of these subtori $S\subset T$, and pick some decomposition
			\[
			f= f_1+f_2
			\]
			where $f_1$ consists of terms coming from the orbit of $B_2$ or $B_1(z_1)B_1(z_2)$ which \emph{do not} have a discontinuity along $S$, and $f_2$ consists of terms from the orbit of $B_1(z_1)B_1(z_2)$ which do have a discontinuity along $S$. Then we find that
			\begin{align}
				\frac{1}{2}\left(\lim_{z\to x^+} f_2(z) + \lim_{z\to x^-} f_2(z)\right) &= \frac{1}{2}\left(\lim_{z\to x^+} f(z)-f_1(z) + \lim_{z\to x^-} f(z)-f_1(z)\right) \\
				&= \frac{1}{2}\left(\lim_{z\to x^+} -f_1(z) + \lim_{z\to x^-} -f_1(z)\right)
			\end{align}
			since $f$ is identically zero on a neighborhood of $x$ in $T-S$. But $f_1$ is continuous in a neighborhood of $x$ by assumption, so this expression is just $-f_1(x)$. On the other hand, the average of the two limits we started with is zero from the preceding discussion, so we conclude that $f_1(x)=0$. We also have $f_2(x)=0$ because functions in the orbit of $B_1(z_1)B_1(z_2)$ are zero along their discontinuity loci by construction. We hence conclude that $f(x)=0$; since this applies to any nonzero point $x$, we conclude that $f$ is the zero function on $T-\{0\}$. This concludes the proof of injectivity.
		\end{proof}
		
		\begin{rem}
			The fact that this injection only holds on $T-\{0\}$, and not $T$, also has the following significance: if we had injectivity also at zero, then (by the argument below) \eqref{eq:classical} would yield a valid cocycle even for $\alpha=\beta=0$. This ``$\Phi_{0,0}$'' in fact fails to be a cocycle: it is known in the literature as the \emph{Dedekind symbol} (e.g., in \cite{Duke}), whose coboundary gives a representative of the Euler class for $\SL_2(\Z)$. Write $M_0$ for the analogue of $M$ on all of $T$: then by y using the long exact sequence in $\SL_2(\Z)$-cohomology associated to the short exact sequence
			\[
			0\to \Z \to M_0\to M\to 0
			\]
			one can recover formulas for this Euler coboundary by explicit describing the image of $\theta$ under $H^1(\Gamma, M)\to H^2(\Gamma, \Z)$ (which is the obstruction to lifting $\theta$ to $H^1(\Gamma, M_0)$).
		\end{rem}
		
		To conclude the proof in general, let $T[N]'\subset T$ be the subset of primitive $N$-torsion sections, i.e. those of exact order $N$. Fix a point $x_0\in T[N]'$ with two nonzero coordinates, and let $\Gamma_1(x_0)\subset \SL_2(\Z)$ be the congruence subgroup fixing $x_0$. Then we have an identification
		\[
		\text{Ind}_{\Gamma_1(x_0)}^{\SL_2(\Z)} \Q \cong \hom_{\text{Sets}}(T[N]
		, \Q)
		\]
		since $\SL_2(\Z)$ permutes $T[N]'$ transitively and $\Gamma_1(x_0)$ is the stabilizer of $x_0$. We also have a $\SL_2(\Z)$-equivariant map
		\[
		j: M \to \hom_{\text{Sets}}(T[N]', \Q), f\mapsto (x\mapsto x^* f)
		\]
		which yields by pushforward a cocycle 
		\[
		j_*\theta\in C^1(\SL_2(\Z),\text{Ind}_{\Gamma_1(x_0)}^{\SL_2(\Z)} \Q)
		\]
		such that its restriction to $\Gamma_1(x_0)$ is the cocycle $x_0^*\theta$, which we know represents the class $\frac{1}{2}\Phi_{x_0}$. Thus by Shapiro's lemma, the class $[j_*\theta]$ coincides with the restricted Eisenstein class on $T'[N]$
		\[
		\frac{1}{1-c^2}z^{(c)}_{\SL_2(\Z)} \in H^1(\SL_2(\Z), H^0(T[N]')).
		\]
		This implies that for \emph{any} $(\alpha,\beta)\in T[N]'$, with stabilizer $\Gamma$, we have
		\[
		[(\alpha,\beta)^*\theta] = [(\alpha,\beta)^*j_*\theta] = (\alpha,\beta)^*\frac{1}{1-c^2}z^{(c)}_{\Gamma} = \frac{1}{2}[E_{\alpha,\beta}^2]\in H^1(\Gamma, \Q).
		\]
		Applying this argument starting with some $x_0$ with at least one nonzero coordinate (via the previous argument) completes our proof.
		
	\end{proof}
	
	To conclude the article, we compare our explicit specializations of $\theta^{1,0}$ to the classical formulas \eqref{eq:classical}, when $\gamma\in \SL_2(\Z)$: thanks to the preceding theorem, these values should coincide precisely for all torsion sections $(\alpha,\beta)$, up to a factor of $-1/2$.
	
	We recall our formulas for $\theta^{1,0}$ stemming from the Bruhat decomposition: For $\gamma\in B$, we found
	\[
	\theta^{1,0}(\gamma)  = \frac{b}{2d} B_2(z_2)
	\]
	whose pullback by $(\alpha, \beta)$
	\[
	\frac{b}{2d} B_2(\beta)
	\]
	clearly aligns with the classical formula for $\frac{1}{2}\Phi_{\alpha,\beta}$ on upper triangular matrices.
	
	For $\gamma \in \GL_2^+(\Q) - B$, we recall we had $\theta^{1,0}(\gamma)$ as the sum of three terms
	\[
	\frac{a}{2c} B_2(z_2)  - \begin{pmatrix}
		1 & a\cdot \text{sgn}(c) \\ 0 & |c|
	\end{pmatrix}_* B_1(z_1)B_1(z_2)  +\frac{d}{2\det \gamma} \begin{pmatrix}
		a\cdot \text{sgn}(c) & -1 \\ |c| & 0 
	\end{pmatrix}_* B_2(z_2).
	\]
	After pullback by $(\alpha,\beta)$, the first term becomes 
	\[
	\frac{a}{2c} B_2(\beta).
	\]
	Recalling that $\gamma$ is assumed to have integer entries, the second term becomes
	\begin{align}
		-(\alpha,\beta)^* \begin{pmatrix}
			1 & a\cdot \text{sgn}(c) \\ 0 & |c|
		\end{pmatrix}_* B_1(z_1)B_1(z_2)  & = 
		- \sum_{\substack{x_1+a\cdot \text{sgn}(c) x_2 = \alpha\\ |c| x_2=\beta }} B_1(x_1)B_1(x_2)
		\\
		&=  - \sum_{i=0}^{|c|-1} B_1\left(a\frac{\beta+i}{c}-\alpha \right) B_1\left(\frac{\beta+i}{|c|}\right).
	\end{align}
	
	Finally, recalling that $\det \gamma=1$, the third term becomes
	\begin{align}
		\frac{d}{2}  (\alpha,\beta)^* \begin{pmatrix}
			a \cdot \text{sgn}(c) & -1 \\ |c| & 0 
		\end{pmatrix}_* [\text{sgn}(c)]_*  B_2(z_2) & =  \frac{d}{2} \sum_{i=0}^{|c|-1} \text{sgn}(c)B_2\left( a \frac{\beta+i}{|c|} - \text{sgn}(c) \alpha\right) \\
		& = \frac{d}{2} \sum_{i=0}^{|c|-1} \text{sgn}(c)B_2\left(\frac{\beta+i}{|c|}\right) \\
		& = \frac{d}{2c} B_2(\beta)
	\end{align}
	by the distribution property of $B_2$ combined with the relation
	\[
	\frac{\beta}{c} = \frac{a}{c} \beta -\alpha
	\]
	coming from the fact that $\gamma^{-1}(\alpha,\beta)=(\alpha,\beta)$, i.e. $\gamma$ fixes the torsion point of evaluation.
	
	Combining these three terms, we obtain precisely the classical formula \eqref{eq:classical} for $\frac{1}{2}\Phi_{\alpha,\beta}$ on the other Schubert cell.
	
	\subsection{Future directions} 
	One extension of the methods of this article, the subject of work in progress, is to compute a formula for the $(\GL_2,\GL_2)$-Eisenstein theta lift of \cite{BCG}: in this setting, $(S^1)^2$ is replaced by the square of an elliptic curve, the Bernoulli polynomials by theta series $E_1(\tau,z),E_2(\tau,z)$ whose pullbacks are weight-$1$ and $2$ Eisenstein series, and the equivariant de Rham complex by the weight-$2$ equivariant Dolbeault complex. Otherwise, the method goes through in the same way, though the discontinuity phenomenon exhibited by $B_1$ is replaced by a logarithmic singularity of $E_1$. Unlike the present article, the formulas in the theta lift setting are novel. (In parallel, we also are writing a preprint \cite{X3} on the ``stabilized'' approach to this theta lift, though this in the setting of $K$-theory rather than the differential forms which are their regulators.)
	
	The more ``obvious'' direction of generalization is to replace $\GL_2$ with $\GL_n$, with its action on the $n$-torus, and to obtain formulas for the periods of Eisenstein series of this larger group in terms of sums of products of Bernoulli polynomials of total degree $n$: this would be the ``algebraic'' analogue of the Bernoulli formulas in \cite{Scz2}, just as the present article is to \cite{Scz1}. However, though most of our formalism goes through in this case largely untouched, the combinatorics of finding lifts becomes much more delicate as $n$ increases: instead of just taking the Bruhat decomposition, one needs to consider families of nested parabolics to lift the Bernoulli currents at various stages. It is presently unclear to us what a good systematic method for keeping track of these lifts might be; an approach inspired by the stratifications used in \cite{Scz2} may be useful. A topological approach, as we used in a slightly different setting \cite{X2} for example, would be ideal; however, it would have to be much more complicated than in loc. cit. to account for the phenomena arising from various parabolic subgroups.
	
	The other issue that occurs in this generalization is that the analogue of Lemma \ref{lem:inj} is only true in codimension $1$ generally, as suggested by the main theorem of \cite{Scz2}, so we would not know how to treat pullbacks torsion sections with multiple zero coordinates in a uniform one. (One could simply take a different initial lift to avoid the issue with any given fixed torsion section, but then this makes the results less systematic.) Because of these technical difficulties, this direction of generalization is currently not the subject of our work; however, it would be interesting to understand how they can be circumvented.
	
	Finally, it would be interesting to see if similar formulas could be obtained in this way for $\GL_2$-Eisenstein series of higher weight, by introducing twisted coefficients into our complexes. It is, however, not obvious to us what precisely the twisted analogue of our periodic Bernoulli polynomials should be.
	
	\printbibliography
	
\end{document}